\numberwithin{equation}{section}
\theoremstyle{plain}
\newtheorem{thm}{Theorem}[section]
\newtheorem{lem}{Lemma}[section]
\theoremstyle{definition}
\newcommand{\ZZ}{\mathbb{Z}}
\DeclareMathOperator{\cay}{Cay}
\begin{document}


\title{On the average hitting times of $\cay(\ZZ_N,\{+1,+2\})$}
\author{Yuuho Tanaka}
\address{Graduate School of Science and Engineering, Waseda University, Tokyo, 169-8555, Japan} \email{tanaka\_yuuho\_dc@akane.waseda.jp} 
\date{}
\maketitle

\begin{abstract}
The exact formula for the average hitting time (HT, as an abbreviation) of simple random 
walks 
on $\cay(\ZZ_N,\{\pm1,\pm2\})$ 
was given by 
Y. Doi et al. [\textit{Discrete Applied Mathematics}, \textbf{313} (2022) 18-28].
Y. Doi et al. give a simple formula for the HT's of simple random walks on $\cay(\ZZ_N,\{\pm1,\pm2\})$ by using an elementary method.
In this paper, using an elementary method also used by Y. Doi et al. \cite{HT}, we give a simple formula for HT's of simple random walks on $\cay(\ZZ_N,\{+1,+2\})$. 
\end{abstract}

{\small
	\noindent
	{\bfseries Key Words:}
	simple random walk, hitting time, Cayley graph, Jacobsthal number
	
}

\section{Introduction}

A simple random walk on a graph $G$ is a discrete stochastic model
such that a random walker at a vertex $u\in V(G)$ moves to a vertex $v$
adjacent to the vertex $u$ at the next step with the probability of
$\frac{1}{\deg_G(u)}$, where $\deg_G(u)$ denotes the degree of the vertex $u$
of $G$.
The number of steps required for the random walker starting at vertex $p$ of $G$ to reach vertex $q$ of $G$ for the first time is called {\it the hitting time} from $p$ to $q$ of the simple random walk on $G$.
{\it The average hitting time} (HT, as an abbreviation) from $p$ to $q$ on $G$, 
denoted by $h(G;p,q)$, means the expected value of the hitting times
from $p$ to $q$ of simple random walks on $G$.
Note that $h(G;p,p)=0$.
The exact formula for the average hitting time from one vertex to any other vertex is far from
available in general.
For some very special graph classes with high symmetries, it may be possible to obtain such exact formulas. 

Let $G$ be a finite group and $S\subseteq G$ be a subset.
The corresponding {\it Cayley graph} $\cay({\it G,S})$ has vertex set equal to $G$.
Two vertices $g,h\in G$ are joind by a directed edge from $g$ to $h$ if and only if there exists $s\in S$ such that $g=sh$.
In N. Chair \cite{Chair}, by calculating the exact value of Wu's formula \cite{Wu} of the effective resistances in terms of the eigenvalues and the eigenvectors of the Laplacian matrices of $\cay(\ZZ_N,\{\pm1,\pm2\})$, the author gave the expression depending on the parity of $N$.
In Y. Doi et al. \cite{HT}, by using an elementary method, they gave a much simpler formula than Chair's formula \cite{Chair}.
This formula does not depend on the parity of $N$.


\begin{thm}[Y. Doi et al., 2022 \cite{HT}]\label{main}
Let $F_i$ be the $i$-th Fibonacci number.
The exact formula for the HT's of simple random walks on $\cay(\ZZ_N,\{\pm1,\pm2\})$ $(N \geq 5)$ is, 
\[
h(\cay(\ZZ_N,\{\pm1,\pm2\});0,l)=\frac{2}{5}\left(l\left(N-l\right)+2N\frac{F_{l}F_{N-l}}{F_{N}}\right).
\] 
\end{thm}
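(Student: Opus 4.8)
The plan is to compress everything into a single linear recurrence, solve that recurrence in closed form, and then recognise the Fibonacci‑product shape of the answer via a short identity. First I would pass to the function $g(i):=h(\cay(\ZZ_N,\{\pm1,\pm2\});i,0)$, the expected time to reach $0$ from $i$, so that $g(0)=0$. Because Cayley graphs are vertex‑transitive, $h(\cay(\ZZ_N,\{\pm1,\pm2\});0,l)=g(-l)=g(N-l)$, and because $x\mapsto-x$ is an automorphism of the graph fixing $0$, we also get $g(N-l)=g(l)$ and, more generally, $g(i)=g(N-i)$ for every $i$. First‑step analysis then yields, for $i=1,\dots,N-1$ and with indices read modulo $N$,
\[
g(i-2)+g(i-1)-4g(i)+g(i+1)+g(i+2)=-4 .
\]

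Next I would solve this. Viewing $g$ as a function on $\{0,1,\dots,N\}$, the homogeneous recurrence has the palindromic characteristic polynomial $x^{4}+x^{3}-4x^{2}+x+1$, and the substitution $y=x+x^{-1}$ collapses it to $y^{2}+y-6=0$; hence $y\in\{2,-3\}$, and the roots are $x=1$ (a double root) and $x=-\varphi^{\pm2}$, where $\varphi=\frac{1+\sqrt5}{2}$. Since the right‑hand side is constant and $1$ is a double root, $-\frac25 i^{2}$ is a particular solution. Imposing the symmetry $g(i)=g(N-i)$ on the general solution cuts the four constants down to two and leaves the ansatz
\[
g(i)=\tfrac25\,i(N-i)+A+C\,(\rho^{i}+\rho^{N-i}),\qquad 0\le i\le N,\qquad \rho:=-\varphi^{2}.
\]
The two constants are fixed by two conditions: $g(0)=0$, which gives $A=-C(1+\rho^{N})$; and the cyclic compatibility at the ends — the recurrence at $i=1$ (and, by symmetry, the one at $i=N-1$, which is the same equation) calls on $g(N-1)$, the periodic image of $g(-1)$, so the ansatz must satisfy $g(-1)=g(N-1)$, which after factoring reads $C\,(\rho^{2}-1)(\rho^{N}-1)/\rho=\tfrac45 N$, i.e.\ $C=\dfrac{4N\rho}{5(\rho^{2}-1)(\rho^{N}-1)}$.

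Finally I would assemble
\[
h(\cay(\ZZ_N,\{\pm1,\pm2\});0,l)=g(l)=\tfrac25\,l(N-l)-C\,(\rho^{l}-1)(\rho^{N-l}-1)
\]
and apply the elementary identity
\[
\rho^{m}-1=(-1)^{m}\sqrt5\,\varphi^{m}F_{m},
\]
which follows at once from $F_{m}=(\varphi^{m}-\psi^{m})/\sqrt5$ and $\varphi\psi=-1$ (with $\psi=\tfrac{1-\sqrt5}{2}$), taking $m=l,\ N-l,\ 2,\ N$ in turn. This turns $\rho(\rho^{l}-1)(\rho^{N-l}-1)/\bigl((\rho^{2}-1)(\rho^{N}-1)\bigr)$ into $-F_{l}F_{N-l}/F_{N}$, and the whole expression collapses to $\tfrac25\bigl(l(N-l)+2N\,F_{l}F_{N-l}/F_{N}\bigr)$, as claimed.

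The step I expect to be the real obstacle is the cyclic bookkeeping, not any single computation. Because the defining recurrence fails exactly at the target vertex and the $\pm2$ steps tie the vertices near $0$ to those near $N$, this is genuinely a cycle problem rather than a path problem, so one has to extract precisely the right boundary and compatibility conditions — and to record that $N\ge5$, so that the graph is a simple $4$‑regular graph and $\rho^{N}\ne1$, which is what makes the division above legitimate. After that, the characteristic‑polynomial factorisation, the two linear equations for $A$ and $C$, and the Fibonacci identity are all routine.
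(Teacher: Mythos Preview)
Your argument is correct: the characteristic polynomial, its factorisation via $y=x+x^{-1}$, the particular solution $-\tfrac25 i^{2}$, the symmetry reduction to two constants, the compatibility condition $g(-1)=g(N-1)$, and the identity $\rho^{m}-1=(-1)^{m}\sqrt5\,\varphi^{m}F_{m}$ all check out. The cyclic bookkeeping you flagged is indeed the only delicate point, and it is handled: the choice $A=-C(1+\rho^{N})$ forces the ansatz to satisfy $g(N)=0=g(0)$, and the single imposed condition $g(-1)=g(N-1)$ automatically yields $g(N+1)=g(1)$, so every wrapped instance of the recurrence (at $i=1,2,N-2,N-1$) is accounted for.

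Note, however, that this theorem is not proved in the present paper; it is quoted from Doi et~al.\ \cite{HT} as background. From the paper's description (``considerably short and fully combinatorial'') and from the parallel argument it carries out in Section~3 for $\cay(\ZZ_N,\{+1,+2\})$, the method of \cite{HT} is to write the hitting‑time vector as the solution of $H_N\vec{h}=c\,\vec{1}$ with $H_N$ the truncated Laplacian, guess a Fibonacci‑product closed form for every entry of $H_N^{-1}$, verify $H_NS_N=I$ by an entrywise case analysis, and then sum each row using Fibonacci identities. Your route is genuinely different: you never touch $H_N^{-1}$, instead solving the fourth‑order recurrence directly and fixing the two surviving constants with one boundary value and one periodicity constraint. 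The matrix‑inverse approach yields more information (the full Green's function, hence all pairwise effective resistances) at the price of a lengthy verification; your approach is shorter for the hitting times themselves and makes transparent \emph{why} Fibonacci numbers appear, through the roots $-\varphi^{\pm2}$ coming from $y=-3$.
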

Its proof is considerably short and fully combinatorial.
Furthermore, it does not require any spectral graph theoretical arguments.

In this paper, using an elementary method also used by Y. Doi et al. \cite{HT}, we give a simple formula for HT's of simple random walks on $\cay(\ZZ_N,\{+1,+2\})$. 
The organization of this paper is as follows.
In Section 2, we fix the notation for the graphs $\cay(\ZZ_N,\{+1,+2\})$ and give some properties of the Jacobsthal sequence.
In Section 3, we give the entries of the inverse of $H_N$.
As a consequence, we obtain the exact formula for HT on $\cay(\ZZ_N,\{+1,+2\})$ by calculation.
This equation does not depend on the parity of $N$.

\section{Preliminary}

The sequence of numbers $J_n$ defined by the recurrence relation $J_0=0$, $J_1=1$, $J_{n+2}=J_{n+1}+2J_n$ $(n=0,1,2,\ldots)$ is called the {\it Jacobsthal sequence}.
We prepare some well-known formulas on Jacobsthal numbers, 
which will be used throughout this paper. 
For proofs of these formulas, please refer to the appropriate
literature on Jacobsthal numbers (e.g., \cite{Horadam}).

\begin{align}
J_n&=\frac{1}{3}(2^n-(-1)^n) \label{j0}\\
J_{n+1}+J_n&=2^n \label{j1}\\
J_{n+1}-2J_n&=(-1)^n \label{j2}\\
J_m(J_{n+1}+2J_{n-1})+J_n(J_{m+1}+2J_{m-1})&=2J_{m+n} \label{j4}
\end{align}

\begin{lem}
\begin{align}
J_{n-1}&=J_jJ_{n-j}+2J_{j-1}J_{n-j-1} \label{lj1}\\
\sum_{j=1}^n2^jJ_{n-j}&=\frac{2}{3}(nJ_{n-1}+(n-1)J_n). \label{lj2}\\
\sum_{j=1}^n(-1)^{n+j}J_j&=\frac{1}{3}(nJ_{n-1}-(n-2)J_n) \label{lj3}
\end{align}
\end{lem}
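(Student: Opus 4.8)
The plan is to read off all three identities from the closed form \eqref{j0}, $J_n=\tfrac13\bigl(2^n-(-1)^n\bigr)$: once $J$ is replaced by this expression, each statement becomes an identity between $2^n$ and $(-1)^n$ that follows from the finite geometric series formula. For \eqref{lj1} there is also a slicker, formula-free route through the companion matrix of the recurrence, which I would present first.

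For \eqref{lj1}, set $M=\begin{pmatrix}1&2\\1&0\end{pmatrix}$; an immediate induction on $k$ gives $M^{k}=\begin{pmatrix}J_{k+1}&2J_{k}\\ J_{k}&2J_{k-1}\end{pmatrix}$ for $k\ge 1$. Comparing the $(1,2)$-entries on the two sides of $M^{\,n-1}=M^{\,j-1}M^{\,n-j}$ then yields exactly $J_{n-1}=J_{j}J_{n-j}+2J_{j-1}J_{n-j-1}$; this is simply the standard Jacobsthal addition law $J_{a+b}=J_{a+1}J_{b}+2J_{a}J_{b-1}$ specialised to $a=j-1$, $b=n-j$, and the boundary case $j=1$ is the triviality $J_{n-1}=J_{1}J_{n-1}+2J_{0}J_{n-2}$. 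Alternatively, substituting \eqref{j0} into both sides and expanding the four products of the shape $2^{\alpha}(-1)^{\beta}$, the two families of mixed terms cancel identically and one is left with $\tfrac13\bigl(2^{n-1}+(-1)^{n}\bigr)=J_{n-1}$.

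For \eqref{lj2} and \eqref{lj3}, substitute \eqref{j0} and use the identity $2^{j}(-1)^{n-j}=(-1)^{n}(-2)^{j}$. The left side of \eqref{lj2} then becomes $\tfrac13\sum_{j=1}^{n}\bigl(2^{n}-(-1)^{n}(-2)^{j}\bigr)=\tfrac13\bigl(n\,2^{n}-(-1)^{n}\!\sum_{j=1}^{n}(-2)^{j}\bigr)$, and $\sum_{j=1}^{n}(-2)^{j}=\tfrac23\bigl((-2)^{n}-1\bigr)$; re-expressing the resulting closed form back through \eqref{j0} gives $\tfrac23\bigl(nJ_{n-1}+(n-1)J_{n}\bigr)$. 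Likewise $(-1)^{n+j}J_{j}=\tfrac{(-1)^{n}}{3}\bigl((-2)^{j}-1\bigr)$, so the left side of \eqref{lj3} equals $\tfrac{(-1)^{n}}{3}\bigl(\tfrac23((-2)^{n}-1)-n\bigr)$, and the same re-expression produces the stated right side. A purely inductive variant also works: writing $S_n$ for the left side of \eqref{lj2}, peeling off the two top terms and applying $J_{m+2}=J_{m+1}+2J_{m}$ to the remaining sum gives $S_{n+2}=2^{n+1}+S_{n+1}+2S_{n}$, after which it suffices to check that the proposed right side obeys the same recurrence and agrees at $n=1,2$; for \eqref{lj3} the analogous reduction is the telescoping relation $T_{n}=J_{n}-T_{n-1}$ for $T_n$ the left side.

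I do not anticipate a genuine obstacle: the content of the lemma is precisely that these are consequences of \eqref{j0} together with the geometric series formula. The only steps demanding care are the sign bookkeeping in \eqref{lj3} and, if one prefers the inductive route, correctly accounting for the boundary terms created by shifting the summation index in \eqref{lj2}.
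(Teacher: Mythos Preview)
Your routes for \eqref{lj2} and \eqref{lj3} are exactly what the paper does: substitute \eqref{j0}, sum the resulting geometric series, and convert back via \eqref{j0}--\eqref{j2}. For \eqref{lj1} your companion-matrix argument is genuinely different from the paper's, which instead specialises the identity \eqref{j4} and then simplifies with the recurrence; your version is cleaner and self-contained (no appeal to \eqref{j4}), while the paper's stays within the list of quoted formulas.

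There is, however, a real gap in your handling of \eqref{lj3}. You correctly reach
\[
\sum_{j=1}^{n}(-1)^{n+j}J_j=\frac{(-1)^n}{3}\Bigl(\tfrac{2}{3}\bigl((-2)^n-1\bigr)-n\Bigr)=\frac{1}{3}\bigl(2J_n-n(-1)^n\bigr),
\]
but then assert that ``the same re-expression produces the stated right side'' without carrying it out. If you do carry it out, using \eqref{j2} in the form $(-1)^{n-1}=J_n-2J_{n-1}$, you obtain $\tfrac{1}{3}\bigl((n+2)J_n-2nJ_{n-1}\bigr)$, not $\tfrac{1}{3}\bigl(nJ_{n-1}-(n-2)J_n\bigr)$. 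A quick sanity check already shows the printed formula fails: at $n=1$ the left side is $J_1=1$ while the right side is $\tfrac13$. The paper's own derivation slips a sign when passing from $\sum(-1)^{n+j}(2^j-(-1)^j)$ to the next line (the $-(-1)^n n$ there should be $+(-1)^n n$), and this propagates to the stated identity. So your method is fine, but the step you left unchecked is precisely the one that does not go through as written; you should record the corrected right-hand side rather than the one printed.
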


\begin{proof}
(\ref{lj1})
\begin{align*}
J_{n-1}&=\frac{1}{2}(J_{n-j}(J_j+2J_{j-2})+J_{j-1}(J_{n-j+1}+2J_{n-j-1})) &&\text{by (\ref{j4})}\\
&=\frac{1}{2}(J_{n-j}(2J_j-J_{j-1})+J_{j-1}(J_{n-j}+4J_{n-j-1}))\\
&=J_jJ_{n-j}+2J_{j-1}J_{n-j-1}.
\end{align*}

(\ref{lj2})
\begin{align*}
\sum_{j=1}^n2^jJ_{n-j}&=\frac{1}{3}\sum_{j=1}^n2^j(2^{n-j}-(-1)^{n-j}) &&\text{by (\ref{j0})}\\
&=\frac{1}{3}(2^n\sum_{j=1}^n1+2(-1)^n\sum_{j=1}^n(-2)^{j-1})\\
&=\frac{1}{3}(2^nn+2(-1)^n\frac{1-(-2)^n}{3})\\
&=\frac{1}{3}(2n(J_{n-1}+J_n)-\frac{2}{3}(2^n-(-1)^n)) &&\text{by (\ref{j1})}\\
&=\frac{1}{3}(2n(J_{n-1}+J_n)-2J_n) &&\text{by (\ref{j0})}\\
&=\frac{2}{3}(nJ_{n-1}+(n-1)J_n).
\end{align*}

(\ref{lj3})
\begin{align*}
\sum_{j=1}^n(-1)^{n+j}J_j&=\frac{1}{3}\sum_{j=1}^n(-1)^{n+j}(2^j-(-1)^j) &&\text{by (\ref{j0})}\\
&=-\frac{1}{3}(2(-1)^n\sum_{j=1}^n(-2)^{j-1}-(-1)^n\sum_{j=1}^n1)\\
&=-\frac{1}{3}(2(-1)^n\frac{1-(-2)^n}{3}-(-1)^nn)\\
&=-\frac{1}{3}(-\frac{2}{3}(2^n-(-1)^n)+n(J_n-J_{n-1})) &&\text{by (\ref{j2})}\\
&=-\frac{1}{3}(-2J_n+n(J_n-J_{n-1})) &&\text{by (\ref{j0})}\\
&=\frac{1}{3}(nJ_{n-1}-(n-2)J_n).
\end{align*}

\end{proof}

\section{Proof of main theorem}
The laplacian matrix of graph $\cay(\ZZ_N,\{+1,+2\})$ is a matrix $L$ whose entries $L(i,j)$ are given by
\[
L(i,j)=\begin{cases}
\deg_{\cay(\ZZ_N,\{+1,+2\})}(i) &\text{if $i=j$,}\\
-1 &\text{if $i\neq j$ and there is an arc from $i$ to $j$,}\\
0 &\text{otherwise.}
\end{cases}
\]
Let $L^{\prime}$ be the matrix obtained from $L$ by deleting the last row and column. 
Let $\vec{h}$ be the column vector whose $i$-th entry is $h(\cay(\ZZ_N,\{+1,+2\});0,i)$ and 
let $\vec{1}$ be a column vector of proper dimension whose entries are all $1$.
In the case of a random walk on $\cay(\ZZ_N,\{+1,+2\})$, a random walker moves with probability $\frac{1}{2}$ to an arbitrary vertex adjacent to the vertex where the walker is.
Hence we have that $\forall l \in\mathbb{Z}_N\setminus\{0\}$, 
$h(\cay(\ZZ_N,\{+1,+2\});0,l)=\frac{1}{2}((1+h(\cay(\ZZ_N,\{+1,+2\});1,l)+(1+h(\cay(\ZZ_N,\{+1,+2\});2,l)))$,
and hence $\forall l \in\mathbb{Z}_N\setminus\{0\}, 2h(\cay(\ZZ_N,\{+1,+2\});0,l)-h(\cay(\ZZ_N,\{+1,+2\});0,l-1)-h(\cay(\ZZ_N,\{+1,+2\});0,l-2)=2$. 
So we have ${}^tL' \vec{h}=2\vec{1}$.
Note that $h(\cay(\ZZ_N,\{+1,+2\});0,l)=h(\cay(\ZZ_N,\{+1,+2\});k,k+l)=h(\cay(\ZZ_N,\{+1,+2\});k+N-l,k)$ for all $k,l \in \mathbb{Z}_{N}$.
Let $L'(i, j)$ denote the $(i, j)$-th entry of $L'$. 
Let $H_N$ be the $(N-1)\times(N-1)$-matrix whose $(i,j)$-th entry is 
${}^tL'(i, j)$.
Then we have $H_N\vec{h}=2\vec{1}$.
For our problem it is sufficient to solve this matrix equation.

Then we have the following:
\begin{thm}\label{HNinverse}
The entries of $H_N^{-1}$ is as follows:
\[
H_N^{-1}(i,j)=\begin{cases}
\frac{J_iJ_{N-i-1}}{J_N} &(j=i+1)\\
\frac{J_{i-j+1}(J_{N-i+j-2}+J_{N-i+j-1})-(-1)^{i+j}J_{j-1}J_{N-i-1}}{J_N} &(j<i+1,\;(i,j)\neq(N-1,1))\\
\frac{J_iJ_{N-j}(J_{j-i-1}+J_{j-i})}{J_N} &(j>i+1)\\
\frac{J_{N-1}}{J_N} &((i,j)=(N-1,1))
\end{cases}
\]
\end{thm}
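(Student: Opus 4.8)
The plan is to verify by direct computation that $H_N M=I_{N-1}$, where $M$ denotes the $(N-1)\times(N-1)$ matrix with rows and columns indexed by $1,\dots,N-1$ whose entries are the four displayed expressions; since $H_N$ is square, producing a right inverse identifies $M$ with $H_N^{-1}$. The first step is to make the shape of $H_N$ explicit. From the relations preceding the statement, $H_N(l,l)=2$ for every $l$, $H_N(l,l-1)=H_N(l,l-2)=-1$ whenever the column index lies in $\{1,\dots,N-1\}$, the single entry $H_N(1,N-1)=-1$ records the wrap-around term $h(0,-1)=h(0,N-1)$ in the equation for $l=1$, and every remaining entry vanishes. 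So $H_N$ is lower triangular and banded apart from that one top-right corner; note in passing that its determinant is $2^{N-1}-J_{N-1}=J_N$ by~(\ref{j1}), which already explains the common denominator in the statement.

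Because rows $3,\dots,N-1$ of $H_N$ have exactly three nonzero entries while rows $1$ and $2$ have two, the identity $H_N M=I_{N-1}$ is equivalent to the scalar relations, for each fixed column index $k$,
\[
2M(l,k)-M(l-1,k)-M(l-2,k)=\delta_{lk}\quad(3\le l\le N-1),
\]
\[
2M(2,k)-M(1,k)=\delta_{2k},\qquad 2M(1,k)-M(N-1,k)=\delta_{1k}.
\]
Here the first argument is the running index, and the formula for $M(i,k)$ changes as $i$ crosses the seams $i=k-1$ and $i=k$. I would first check that the four pieces are mutually consistent at these seams: the case $j>i+1$, evaluated at $j=i+1$, gives $J_iJ_{N-i-1}(J_0+J_1)/J_N=J_iJ_{N-i-1}/J_N$, which is the case $j=i+1$; the case $j<i+1$, evaluated at $j=i+1$, gives $\bigl(J_0(J_{N-1}+J_N)+J_iJ_{N-i-1}\bigr)/J_N=J_iJ_{N-i-1}/J_N$ as well; and the case $j<i+1$, evaluated at $(i,j)=(N-1,1)$, already returns $J_{N-1}/J_N$, so the fourth line is a boundary value of the second rather than a new formula. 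The workhorse throughout is~(\ref{j1}) in the form $J_m+J_{m+1}=2^m$, which collapses every parenthesized factor in the statement to a power of $2$.

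The verification then splits according to the position of $l$ relative to $k$. In the two \emph{generic} regions $l\le k-2$ and $l\ge k+2$, where $l,l-1,l-2$ lie entirely on one side of $k$, one substitutes the single applicable formula, applies~(\ref{j1}), forms $2M(l,k)-M(l-1,k)-M(l-2,k)$, and factors out the ambient power of $2$; what remains is an instance of $J_{m+1}-J_m-2J_{m-1}=0$ (the Jacobsthal recurrence), and the $(-1)^{i+j}$-part occurring in the case $j<i+1$ collapses the same way, so the sum is $0$, as needed off the diagonal. The genuinely substantive entries are the diagonal one, $l=k$, and the two wrap-around rows $l=1,2$ (the latter invoke the explicit value $M(N-1,1)=J_{N-1}/J_N$). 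There the three terms are drawn from two or three distinct cases of the formula, and after~(\ref{j1}) one arrives at an expression of the form
\[
\frac{2^{N-1}-\bigl(J_iJ_{N-i}+2J_{i-1}J_{N-i-1}\bigr)}{J_N},
\]
which equals $1$ by the product identity~(\ref{lj1}), $J_{N-1}=J_iJ_{N-i}+2J_{i-1}J_{N-i-1}$, together with $2^{N-1}=J_N+J_{N-1}$ (again a case of~(\ref{j1})).

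The step I expect to be the main obstacle is the bookkeeping near the seams and the wrap-around: for each $l\in\{k-3,\dots,k+1\}$ and for $l=1,2$ one must decide correctly which of the four formulas applies, handle the degenerate column ranges (for example $k$ near $1$ or near $N-1$, where some subdiagonal entries of $H_N$ are absent and the $(N-1,1)$-entry of $M$ has to be used), and keep the parity factors $(-1)^{i+j}$ under control; once the case list is laid out, each check is a single-line manipulation with~(\ref{j0})--(\ref{lj1}). As a conceptual alternative worth recording, one can instead write $H_N=T-e_1e_{N-1}^{\top}$ with $T=2I-S-S^2$ the lower-triangular Toeplitz matrix ($S$ the down-shift); the generating-function identity $\dfrac{1}{2-x-x^2}=\sum_{n\ge0}\dfrac{J_{n+1}}{2^{n+1}}x^n$ gives $T^{-1}(i,j)=J_{i-j+1}/2^{i-j+1}$ for $i\ge j$ and $0$ otherwise, and one application of the Sherman--Morrison rank-one update formula, simplified with~(\ref{j1}), (\ref{lj1}) and $2^{N-1}=J_N+J_{N-1}$, reproduces exactly the four cases; I would nonetheless present the direct verification, since it stays within the elementary framework of~\cite{HT}.
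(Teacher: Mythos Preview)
Your proposal is correct and follows essentially the same route as the paper: define the candidate matrix and verify $H_NM=I_{N-1}$ entry by entry, using the Jacobsthal recurrence for the off-diagonal vanishing and identity~(\ref{lj1}) together with~(\ref{j1}) for the diagonal entries. The paper carries this out as an explicit thirteen-case split (by the position of $(i,j)$ and the special rows $i=1,2$), whereas you organize the same computation around the seam-consistency observations and the recurrence $2M(l,k)-M(l-1,k)-M(l-2,k)$; the substance is identical.
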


\begin{proof}
Let $S_N$ denote the matrix as follows. 
\[
S_N(i,j)=
\begin{cases}
\frac{J_iJ_{N-i-1}}{J_N} &(j=i+1)\\
\frac{J_{i-j+1}(J_{N-i+j-2}+J_{N-i+j-1})-(-1)^{i+j}J_{j-1}J_{N-i-1}}{J_N} &(j<i+1,\;(i,j)\neq(N-1,1))\\
\frac{J_iJ_{N-j}(J_{j-i-1}+J_{j-i})}{J_N} &(j>i+1)\\
\frac{J_{N-1}}{J_N} &((i,j)=(N-1,1))
\end{cases}
\]

We will show $H_NS_N$ equals to the identity matrix.
By simple matrix-computation, the entries of $H_NS_N$ can be calculated as follows.

In the case that $(i,j)=(1,1)$, we have
\begin{align*}
  H_NS_N(i,j)&=\frac{1}{J_N}(2J_1(J_{N-2}+J_{N-1})-2J_0J_{N-2}-J_{N-1})\\
  &=\frac{1}{J_N}(2(J_{N-2}+J_{N-1})-J_{N-1})\\
  &=\frac{1}{J_N}(2J_{N-2}+J_{N-1})\\
  &=1.
\end{align*}

In the case that $(i,j)=(1,2)$, we have
\begin{align*}
  H_NS_N(i,j)&=\frac{1}{J_N}(2J_1J_{N-2}-J_{N-2}(J_1+J_2)+(-1)^{N+1}J_1J_0)\\
  &=\frac{1}{J_N}(2J_{N-2}-2J_{N-2})\\
  &=0.
\end{align*}

In the case that $i=1,\;j\notin\{1,2\}$, we have
\begin{align*}
  H_NS_N(i,j)&=\frac{1}{J_N}(2J_1J_{N-j}(J_{j-2}+J_{j-1})-J_{N-j}(J_{j-1}+J_j)+(-1)^{N+j-1}J_{j-1}J_0)\\
  &=\frac{1}{J_N}(2J_{N-j}(J_{j-2}+J_{j-1})-J_{N-j}(J_{j-1}+J_j))\\
  &=\frac{J_{N-j}}{J_N}(2J_{j-2}+J_{j-1}-J_j)\\
  &=0.
\end{align*}

In the case that $(i,j)=(2,1)$, we have
\begin{align*}
  H_NS_N(i,j)&=\frac{1}{J_N}(-J_1(J_{N-2}+J_{N-1})+J_0J_{N-2}+2J_2(J_{N-3}+J_{N-2})+2J_0J_{N-3})\\
  &=\frac{1}{J_N}(-J_{N-2}-J_{N-1}+2(J_{N-3}+J_{N-2}))\\
  &=\frac{1}{J_N}(2J_{N-3}+J_{N-2}-J_{N-1})\\
 &=0.
\end{align*}

In the case that $(i,j)=(2,2)$, we have
\begin{align*}
  H_NS_N(i,j)&=\frac{1}{J_N}(-J_1J_{N-2}+2J_1(J_{N-2}+J_{N-1})-2J_1J_{N-3})\\
  &=\frac{1}{J_N}(-J_{N-2}+2(J_{N-2}+J_{N-1})-2J_{N-3})\\
  &=\frac{1}{J_N}(-2J_{N-3}+J_{N-2}+2J_{N-1})\\
   &=\frac{1}{J_N}(2J_{N-2}+J_{N-1})\\
  &=1.
\end{align*}

In the case that $(i,j)=(2,3)$, we have
\begin{align*}
  H_NS_N(i,j)&=\frac{1}{J_N}(-J_1J_{N-3}(J_1+J_2)+2J_2J_{N-3})\\
  &=\frac{1}{J_N}(-2J_{N-3}+2J_{N-3})\\
  &=0.
\end{align*}

In the case that $i=2,\;j\notin\{1,2,3\}$, we have
\begin{align*}
  H_NS_N(i,j)&=\frac{1}{J_N}(-J_1J_{N-j}(J_{j-2}+J_{j-1})+2J_2J_{N-j}(J_{j-3}+J_{j-2}))\\
  &=\frac{1}{J_N}(-J_{N-j}(J_{j-2}+J_{j-1})+2J_{N-j}(J_{j-3}+J_{j-2}))\\
  &=\frac{J_{N-j}}{J_N}(-J_{j-1}+J_{j-2}+2J_{j-3})\\
  &=0.
\end{align*}

In the case that $i>j+1,i\notin\{1,2\},(i,j)\neq(N-1,1)$, we have
\begin{align*}
  H_NS_N(i,j)&=\frac{1}{J_N}(-J_{i-j-1}(J_{N-i+j}+J_{N-i+j+1})+(-1)^{i+j-4}J_{j-1}J_{N-i+1}\\
  &~-J_{i-j}(J_{N-i+j-1}+J_{N-i+j})+(-1)^{i+j-3}J_{j-1}J_{N-i}\\
  &~+2J_{i-j+1}(J_{N-i+j-2}+J_{N-i+j-1})-2(-1)^{i+j-2}J_{j-1}J_{N-i-1})\\
  &=\frac{1}{J_N}(-2J_{i-j-1}(J_{N-i+j-1}+J_{N-i+j})-J_{i-j}(J_{N-i+j-1}+J_{N-i+j})\\
  &~+J_{i-j+1}(J_{N-i+j-1}+J_{N-i+j})\\
  &~-(-1)^{i+j}J_{j-1}(2J_{N-i-1}+J_{N-i}-J_{N-i+1}))\\
  &=\frac{1}{J_N}(-(2J_{i-j-1}+J_{i-j}-J_{i-j+1})(J_{N-i+j-1}+J_{N-i+j}))\\
  &=0.
\end{align*}

In the case that $i=j+1,i\notin\{1,2\}$, we have
\begin{align*}
   H_NS_N(i,j)&=\frac{1}{J_N}(-J_{i-2}J_{N-i+1}-J_{i-j}(J_{N-i+j-1}+J_{N-i+j})+(-1)^{i+j-1}J_{j-1}J_{N-i}\\
   &~+2J_{i-j+1}(J_{N-i+j-2}+J_{N-i+j-1})-2(-1)^{i+j}J_{j-1}J_{N-i-1})\\
   &=\frac{1}{J_N}(-J_{j-1}J_{N-j}-J_1(J_{N-2}+J_{N-1})+J_{j-1}J_{N-j-1}\\
   &~+2J_2(J_{N-3}+J_{N-2})+2J_{j-1}J_{N-j-2})\\
   &=\frac{1}{J_N}(-J_{j-1}J_{N-j}-(J_{N-2}+J_{N-1})+J_{j-1}J_{N-j-1}\\
   &~+2(J_{N-3}+J_{N-2})+2J_{j-1}J_{N-j-2})\\
   &=\frac{1}{J_N}(J_{j-1}(2J_{N-j-2}+J_{N-j-1}-J_{N-j})+(2J_{N-3}+J_{N-2}-J_{N-1}))\\
   &=0.
\end{align*}

In the case that $i=j,i\notin\{1,2\}$, we have
\begin{align*}
  H_NS_N(i,j)&=\frac{1}{J_N}(-J_{i-2}J_{N-j}(J_{j-i+1}+J_{j-i+2})-J_{i-1}J_{N-i}\\
  &~+2J_{i-j+1}(J_{N-i+j-2}+J_{N-i+j-1})-2(-1)^{i+j}J_{j-1}J_{N-i-1})\\
  &=\frac{1}{J_N}(-J_{j-2}J_{N-j}(J_1+J_2)-J_{j-1}J_{N-j}\\
  &~+2J_1(J_{N-2}+J_{N-1})-2J_{j-1}J_{N-j-1})\\
  &=\frac{1}{J_N}(-2J_{j-2}J_{N-j}-J_{j-1}J_{N-j}+2(J_{N-2}+J_{N-1})\\
  &~-2J_{j-1}J_{N-j-1})\\
  &=\frac{1}{J_N}(-(2J_{j-2}+J_{j-1})J_{N-j}+(J_{N-1}+J_N)-2J_{j-1}J_{N-j-1})\\
  &=\frac{1}{J_N}(-J_jJ_{N-j}-2J_{j-1}J_{N-j-1}+J_{N-1}+J_N)\\
  &=\frac{1}{J_N}(-J_{N-1}+J_{N-1}+J_N) &&\text{by (\ref{lj1})}\\
  &=1.
\end{align*}

In the case that $i=j-1,i\notin\{1,2\}$, we have
\begin{align*}
  H_NS_N(i,j)&=\frac{1}{J_N}(-J_{i-2}J_{N-j}(J_{j-i+1}+J_{j-i+2})-J_{i-1}J_{N-j}(J_{j-i}+J_{j-i+1})\\
  &~+2J_iJ_{N-i-1})\\
  &=\frac{1}{J_N}(-J_{j-3}J_{N-j}(J_2+J_3)-J_{j-2}J_{N-j}(J_1+J_2)+2J_{j-1}J_{N-j})\\
  &=\frac{1}{J_N}(-4J_{j-3}J_{N-j}-2J_{j-2}J_{N-j}+2J_{j-1}J_{N-j})\\
  &=\frac{2J_{N-j}}{J_N}(-2J_{j-3}-J_{j-2}+J_{j-1})\\
  &=0.
\end{align*}

In the case that $i<j-1,i\notin\{1,2\}$, we have
\begin{align*}
  H_NS_N(i,j)&=\frac{1}{J_N}(-J_{i-2}J_{N-j}(J_{j-i+1}+J_{j-i+2})-J_{i-1}J_{N-j}(J_{j-i}+J_{j-i+1})\\
  &~+2J_iJ_{N-j}(J_{j-i-1}+J_{j-i}))\\
  &=\frac{J_{N-j}}{J_N}(-2J_{i-2}(J_{j-i}+J_{j-i+1})-J_{i-1}(J_{j-i}+J_{j-i+1})\\
  &~+J_i(J_{j-i}+J_{j-i+1}))\\
  &=\frac{J_{N-j}}{J_N}(-2J_{i-2}-J_{i-1}+J_i)(J_{j-i-1}+J_{j-i})\\
  &=0.
\end{align*}

In the case that $(i,j)=(N-1,1)$, we have
\begin{align*}
  H_NS_N(i,j)&=\frac{1}{J_N}(-J_{N-3}(J_2+J_3)+(-1)^{N-2}J_0J_2-J_{N-2}(J_1+J_2)\\
  &~+(-1)^{N-1}J_0J_1+2J_{N-1})\\
  &=\frac{1}{J_N}(-4J_{N-3}-2J_{N-2}+2J_{N-1})\\
  &=0.
\end{align*}

Therefore, we have $S_N=H_N^{-1}$.
\end{proof}

Combining  Theorem~\ref{HNinverse} and the matrix equation
\[
\vec{h}=
\begin{bmatrix}
h(\cay(\ZZ_N,\{+1,+2\});0,1)\\
h(\cay(\ZZ_N,\{+1,+2\});0,2)\\
\vdots \\
h(\cay(\ZZ_N,\{+1,+2\});0,N-2) \\
h(\cay(\ZZ_N,\{+1,+2\});0,N-1)\\
\end{bmatrix}
=2H_N^{-1}
\begin{bmatrix}
1\\
1\\
\vdots\\
1\\
1
\end{bmatrix},
\]
we obtain the following. 

\begin{thm}
For $1\leq l\leq N-1$, the exact formula for the HT's of simple random walks on $\cay(\ZZ_N,\{+1,+2\})$ is,
\begin{align*}
&h(\cay(\ZZ_N,\{+1,+2\});0,l)=\frac{1}{3J_N}(2J_{l-1}(3lJ_{N-l-1}+2lJ_{N-l})\\
&~+J_l((N+l+3)J_{N-l-1}+(N+3l+1)J_{N-l})).
\end{align*}
\end{thm}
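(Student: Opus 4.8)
The plan is to start from the matrix equation $\vec{h}=2H_N^{-1}\vec{1}$ given just after Theorem~\ref{HNinverse}, so that
\[
h(\cay(\ZZ_N,\{+1,+2\});0,l)=2\sum_{j=1}^{N-1}H_N^{-1}(l,j).
\]
Thus the whole task reduces to evaluating the row sum $\sum_{j=1}^{N-1}H_N^{-1}(l,j)$ using the explicit four-case description of the entries from Theorem~\ref{HNinverse}. First I would split the sum according to the position of $j$ relative to $l+1$: the single term $j=l+1$ contributing $\frac{J_lJ_{N-l-1}}{J_N}$ (only present when $l\le N-2$), the terms $j>l+1$ contributing $\sum_{j=l+2}^{N-1}\frac{J_lJ_{N-j}(J_{j-l-1}+J_{j-l})}{J_N}$, and the terms $j\le l$ contributing $\sum_{j=1}^{l}\frac{J_{l-j+1}(J_{N-l+j-2}+J_{N-l+j-1})-(-1)^{l+j}J_{j-1}J_{N-l-1}}{J_N}$, with the separate exceptional entry $\frac{J_{N-1}}{J_N}$ replacing the $(l,j)=(N-1,1)$ term when $l=N-1$. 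It will be cleanest to handle the generic case $2\le l\le N-2$ first and then check $l=1$ and $l=N-1$ as boundary cases at the end.

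Next I would reduce each of the three main sums to the Jacobsthal identities already assembled in the Preliminary section. For the $j>l+1$ block, after substituting $k=j-l$ the sum becomes $\frac{J_l}{J_N}\sum_{k=2}^{N-1-l}J_{N-l-k}(J_{k-1}+J_k)$; using $J_{k-1}+J_k=2^{k-1}$ from \eqref{j1} and then the telescoping/evaluation formula \eqref{lj2}, this collapses to a closed form in $J_{N-l-1}$ and $J_{N-l}$ times $J_l$. For the lower block $j\le l$, the part $\sum_{j=1}^l J_{l-j+1}(J_{N-l+j-2}+J_{N-l+j-1})$ is again $\sum J_{l-j+1}2^{N-l+j-2}$, which after a shift is of the form $\sum 2^m J_{n-m}$ and is evaluated by \eqref{lj2}; the alternating part $\sum_{j=1}^l(-1)^{l+j}J_{j-1}$ is handled by \eqref{lj3} (with an index shift $J_{j-1}$ versus $J_j$), and it is multiplied by the constant $J_{N-l-1}$. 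Summing the three pieces, multiplying by $2$, and simplifying using \eqref{j0} or \eqref{lj1} to merge the various $J_{l-1},J_l,J_{N-l-1},J_{N-l}$ products should produce exactly
\[
\frac{1}{3J_N}\bigl(2J_{l-1}(3lJ_{N-l-1}+2lJ_{N-l})+J_l((N+l+3)J_{N-l-1}+(N+3l+1)J_{N-l})\bigr).
\]

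The main obstacle I expect is bookkeeping rather than conceptual: the row sum mixes a ``geometric-type'' piece ($2^k$-weighted sums of Jacobsthal numbers, handled by \eqref{lj2}) with a genuinely alternating piece (handled by \eqref{lj3}), and the index shifts needed to put $\sum_{j=1}^{l}J_{l-j+1}2^{N-l+j-2}$ and $\sum_{j=1}^l(-1)^{l+j}J_{j-1}$ into the exact shape of the lemma's identities are easy to get off by one. A secondary obstacle is the final consolidation: after applying the lemmas one gets an expression with many terms like $lJ_{l-1}J_{N-l-1}$, $J_lJ_{N-l}$, $NJ_lJ_{N-l-1}$, etc., and one must repeatedly use \eqref{lj1} (in the form $J_{N-1}=J_lJ_{N-l}+2J_{l-1}J_{N-l-1}$, and also with $l$ replaced by $l+1$) together with the recurrence $J_{m+2}=J_{m+1}+2J_m$ to cancel everything down to the stated coefficients $(N+l+3)$ and $(N+3l+1)$. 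Once the generic case is verified, the boundary cases $l=1$ (where the $j\le l$ block is a single term and one checks against $H_N S_N$ computations already done in the proof of Theorem~\ref{HNinverse}) and $l=N-1$ (where the exceptional entry $J_{N-1}/J_N$ enters and the $j>l+1$ block is empty) are short direct checks, so I would relegate them to the end.
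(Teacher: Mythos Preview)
Your proposal is correct and follows essentially the same route as the paper: split the row sum of $H_N^{-1}$ into the blocks $j\le l$, $j=l+1$, $j>l+1$, convert the $J_{k-1}+J_k$ factors to $2^{k-1}$ via \eqref{j1}, evaluate the resulting sums with \eqref{lj2} and \eqref{lj3}, and then simplify. The only cosmetic difference is that the paper treats all $1\le l\le N-2$ uniformly (so your separate check at $l=1$ is unnecessary) and does the final consolidation using \eqref{j1} rather than \eqref{lj1}, but the structure of the argument is the same.
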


\begin{proof}
For $1\leq l\leq N-2$,
\begin{align*}
&h(\cay(\ZZ_N,\{+1,+2\});0,l)\\
&=2\sum_{j=1}^{N-1}H_N^{-1}(l,j)\\
&=\frac{2}{J_N}(\sum_{j=1}^l(J_{l-j+1}(J_{N-l+j-2}+J_{N-l+j-1})-(-1)^{l+j}J_{j-1}J_{N-l-1})\\
&~+J_lJ_{N-l-1}+\sum_{j=l+2}^{N-1}J_lJ_{N-j}(J_{j-l-1}+J_{j-l}))\\
&=\frac{2}{J_N}(\sum_{j=1}^l2^{N-l+j-2}J_{l-j+1}-(-1)^lJ_{N-l-1}\sum_{j=1}^l(-1)^jJ_{j-1}\\
&~+J_lJ_{N-l-1}+J_l\sum_{j=l+2}^{N-1}2^{j-l-1}J_{N-j}) &&\text{by (\ref{j1})}\\
&=\frac{2}{J_N}(2^{N-l-2}\sum_{j=1}^{l+1}2^jJ_{l-j+1}+J_{N-l-1}\sum_{j=1}^l(-1)^{l+j}J_j\\
&~+J_l\sum_{j=1}^{N-l-1}2^jJ_{N-l-j-1})\\
&=\frac{2}{3J_N}(2^{N-l-1}((l+1)J_l+lJ_{l+1})+J_{N-l-1}(lJ_{l-1}-(l-2)J_l)\\
&~+J_l((N-l-1)J_{N-l-2}+(N-l-2)J_{N-l-1})) &&\text{by (\ref{lj2}),(\ref{lj3})}\\
&=\frac{2}{3J_N}(2(J_{N-l-2}+J_{N-l-1})((2l+1)J_l+2lJ_{l-1})\\
&~+J_{N-l-1}(lJ_{l-1}-(l-2)J_l)+J_l((N-l-1)J_{N-l-2}\\
&~+(N-l-2)J_{N-l-1})) &&\text{by (\ref{j1})}
\intertext{}
&=\frac{2}{3J_N}(J_{l-1}(4lJ_{N-l-2}+5lJ_{N-l-1})\\
&~+J_l((N+3l+1)J_{N-l-2}+(N+2l+2)J_{N-l-1}))\\
&=\frac{1}{3J_N}(2J_{l-1}(3lJ_{N-l-1}+2lJ_{N-l})\\
&~+J_l((N+l+3)J_{N-l-1}+(N+3l+1)J_{N-l})).
\end{align*}
In the case of $l=N-1$, we have
\begin{align*}
&h(\cay(\ZZ_N,\{+1,+2\});0,N-1)\\
&=2\sum_{j=1}^{N-1}H_N^{-1}(N-1,j)\\
&=\frac{2}{J_N}(J_{N-1}+\sum_{j=2}^{N-1}J_{N-j}(J_{j-1}+J_j))\\
&=\frac{2}{J_N}(J_{N-1}+\sum_{j=2}^{N-1}2^{j-1}J_{N-j}) &&\text{by (\ref{j1})}\\
&=\frac{1}{J_N}\sum_{j=1}^N2^jJ_{N-j}\\
&=\frac{2}{3J_N}(NJ_{N-1}+(N-1)J_N)  &&\text{by (\ref{lj2})}\\
&=\frac{2}{3J_N}(2(N-1)J_{N-2}+(2N-1)J_{N-1})\\
&=\frac{1}{3J_N}(4(N-1)J_{N-2}J_1+2(2N-1)J_{N-1}J_1)\\
&=\frac{1}{3J_N}(2J_{N-2}(3(N-1)J_0+2(N-1)J_1)\\
&~+J_{N-1}((2N+2)J_0+(4N-2)J_1))\\
&=\frac{1}{3J_N}(2J_{N-2}(3(N-1)J_0+2(N-1)J_1)\\
&~+J_{N-1}((N+(N-1)+3)J_0+(N+3(N-1)+1)J_1)).
\end{align*}

\end{proof}


\section*{Acknowledgements}

The authors thank Tsuyoshi Miezaki and Hiroshi Suzuki for their helpful discussions 
and comments to this research.

This work was supported by JSPS Grant-in-Aid for JSPS Fellows (23KJ2020) and Waseda Research Institute for Science and Engineering, Grant-in-Aid for Young Scientists(Early Bird).




\begin{thebibliography}{99}
\bibitem{Horadam} A. F. Horadam. : Jacobsthal representation numbers,\textit{The Fibonacci Quarterly}, 34.1 (1996) 68-74.
 
\bibitem{Chair}  N. Chair. : The Effective Resistance of the {$N$}-Cycle Graph with Four Nearest Neighbors, \textit{Journal of Statistical Physics}, \textbf{154} (2014) 1177-1190.

\bibitem{HT} Y. Doi, N. Konno, T. Nakamigawa, T. Sakuma, E. Segawa, H. Shinohara, S. Tamura, Y. Tanaka, and K. Toyota. :  On the average hitting times of the squares of cycles, \textit{Discrete Applied Mathematics}, \textbf{313} (2022) 18-28.







\bibitem{Wu}  F. Y. Wu. : Theory of resistor networks: the two-point resistance, \textit{Journal of Physics A: Mathematical and General}, \textbf{37}(26) (2004) 6653-6673. 

\end{thebibliography}
\end{document}